\theoremstyle{plain}
\newtheorem{Thm}{Theorem}
\newtheorem{Coro}[Thm]{Corollary}
\newtheorem{Lem}[Thm]{Lemma}
\theoremstyle{definition}
\begin{document}

\title[Heegaard splittings of surface bundles]{Mapping class groups of Heegaard splittings of surface bundles}

\author{Jesse Johnson}
\address{\hskip-\parindent
        Department of Mathematics \\
        Oklahoma State University \\
        Stillwater, OK 74078 \\
        USA}
\email{jjohnson@math.okstate.edu}

\subjclass{Primary 57M}
\keywords{Heegaard splitting, mapping class group, surface bundle}

\thanks{This project was supported by NSF Grant DMS-1006369}

\begin{abstract}
Every surface bundle with genus $g$ fiber has a canonical Heegaard splitting of genus $2g+1$. We classify the mapping class groups of such Heegaard splittings in the case when the surface bundle has a sufficiently complicated monodromy map.
\end{abstract}

\maketitle

Let $M$ be a closed surface bundle with bundle map $\pi : M \rightarrow S^1$. Equivelently, we can write $M = F' \times [0,1] / \phi$ where $F'$ is a closed surface of genus $g$ and $\phi : F' \times \{0\} \rightarrow F' \times \{1\}$ is the \textit{monodromy} of $\pi$. Assume $g \geq 2$. The complexity of the monodromy map can be measured by its displacement distance $d(\phi)$ in the curve complex for $F'$, whose definition we review below.

Every surface bundle $M$ has a Heegaard surface $\Sigma$ constructed as follows: Let $F = F_1 \cup F_2 = \pi^{-1}\{0,\frac{1}{2}\}$ be two fibers of the bundle. Let $\alpha_-$, $\alpha_+$ be vertical arcs with disjoint endpoints in $F_1$, $F_2$, one in each component of $M \setminus F$. Construct $\Sigma$ by attaching disjoint tubes to $F$ along these arcs, as in Figure~\ref{surfacefig}. 

The \textit{mapping class group} $Mod(M, \Sigma)$ is the group of automorphisms of $M$ that take $\Sigma$ onto itself, modulo isotopies of $M$ that preserve $\Sigma$ setwise. The \textit{isotopy subgroup} of $Mod(M, \Sigma)$ is the set of elements that are isotopy trivial as automorphisms of $M$. In this short note, we define a subgroup $\mathcal{B} \subset Isot(M, \Sigma)$ for the standard Heegaard splitting of any surface bundle, then prove that for sufficiently high distance surface bundles, this subgroup is the entire isotopy subgroup of the mapping class group.

\begin{Thm}
\label{mainthm}
If the monodromy $\phi$ of $\pi$ has displacement $d(\phi) > 28$ then $Isot(M, \Sigma) = \mathcal{B}$.
\end{Thm}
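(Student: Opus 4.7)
The plan is to take an arbitrary $f \in Isot(M,\Sigma)$ and, after composing with elements of $\mathcal{B}$, show that $f$ becomes isotopic to the identity through $\Sigma$-preserving homeomorphisms. Since $\mathcal{B} \subset Isot(M,\Sigma)$ by assumption, this gives the reverse containment. As a free first reduction, because $f$ is isotopic to the identity in $M$, it preserves each of the two handlebodies (or compression bodies) $H_1, H_2$ bounded by $\Sigma$, so it acts on the disk sets $\mathcal{D}(H_i) \subset \mathcal{C}(\Sigma)$.

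The first substantive step is to translate the displacement hypothesis $d(\phi) > 28$ into control on the combinatorial action of $f$ on $\Sigma$. The Heegaard surface decomposes as two twice-punctured fibers $\Sigma_i = F_i \setminus (\text{two open disks})$ joined by two annular tubes, and the tube meridians and fiber curves give canonical vertices in $\mathcal{C}(\Sigma)$. I would use $d(\phi) > 28$ to force a Hempel-style distance bound for the splitting and then apply a bounded geodesic image / subsurface projection argument in $\mathcal{C}(\Sigma)$ to show that any $\Sigma$-preserving self-map isotopic to the identity in $M$ must preserve, up to isotopy on $\Sigma$, the two tube meridians and the isotopy classes of the fiber subsurfaces $\Sigma_i$. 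The constant $28$ should emerge as a sum of Masur--Minsky type projection constants plus contributions from the thickness of the disk sets; doing this accounting carefully is the main technical obstacle.

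Once the tube meridians and fiber subsurfaces are preserved up to isotopy, composing $f$ with Dehn twists along the tubes (which are in $\mathcal{B}$ by construction) arranges that $f$ restricts to a self-homeomorphism of each $\Sigma_i$. Because $f$ is isotopic to the identity in $M$, the induced mapping classes on the fibers are constrained: up to a fractional rotation around the bundle direction, they must commute with the monodromy $\phi$. The displacement hypothesis forces $\phi$ to be pseudo-Anosov, so its centralizer in $Mod(F')$ is virtually cyclic and generated by a root of $\phi$. These roots correspond exactly to the vertical bundle rotations of $M$, which are elements of $\mathcal{B}$.

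After correcting by such a vertical rotation, the induced homeomorphisms on $\Sigma_1$ and $\Sigma_2$ are each isotopic to the identity; a standard extension argument (matching the induced isotopies across the tubes, using once more that the meridian structure is fixed) then produces a $\Sigma$-preserving isotopy of the modified $f$ to the identity. The main difficulty is the quantitative curve-complex estimate in step one, since the later steps are structural and introduce no new distance bounds; I would expect the proof to reduce the theorem to a clean statement of the form ``if $d(\phi)$ exceeds the sum of a handful of explicit Masur--Minsky constants, then $f$ preserves the fiber decomposition up to isotopy.''
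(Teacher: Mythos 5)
There are genuine gaps here, and the central one is in the step you yourself flag as the main obstacle. The paper does not run a Hempel-distance / bounded-geodesic-image argument on $\Sigma$, and in fact it cannot: $\Sigma$ is \emph{weakly reducible} (the disks $D^-$, $D^+$ dual to the tubes form a weak reducing pair), so its Hempel distance is at most one and the standard high-distance machinery for Heegaard splittings is unavailable. What the paper actually proves from $d(\phi)>28$ is that $D^-$, $D^+$ is the \emph{unique} weak reducing pair up to isotopy. This is done by taking a hypothetical second pair $E^-$, $E^+$, passing to an outermost disk, extracting essential edge loops $\ell^\pm_i$ in graphs on the punctured fibers, and applying Masur--Schleimer's Lemma 12.12 on holes in the complex of curves to the interval-bundle maps between the fibers; the constant $28 = 12+2+12+2$ comes from chaining two such estimates through the monodromy. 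Once uniqueness is known, every $\psi\in Mod(M,\Sigma)$ must preserve $\{D^-,D^+\}$ setwise, hence preserve $F$ and the arcs $\alpha_\pm$; your proposal never gets to an argument that would deliver this, and the route you sketch starts from a premise (a high-distance splitting) that is false for this $\Sigma$. Note also that your ``free first reduction'' is wrong: the half-integer vertical translations in $\mathcal{B}$ are isotopic to the identity in $M$ yet \emph{swap} the two compression bodies (they exchange $\alpha_-$ and $\alpha_+$), so an element of $Isot(M,\Sigma)$ need not preserve each side of $\Sigma$.

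The second gap is in your endgame, where you would land on the wrong group. The subgroup $\mathcal{A}\subset\mathcal{B}$ is not generated by Dehn twists along the tubes; it is built from the two-strand surface braid groups $B_{2,F_1}$ and (the homotopy-trivial part of) $B_{2,F_2}$, recording how the endpoints of $\alpha_\pm$ move in the fibers. Correspondingly, after you normalize by a vertical rotation, the induced map on each twice-punctured fiber $\Sigma_i$ is isotopic to the identity only as a map of the \emph{closed} fiber; as a map of the punctured fiber it can still differ from the identity by point-pushing elements, and accounting for these is exactly the content of the paper's chain of homomorphisms $Isot(M,\Sigma)\to\mathbf{Z}$, then $\mathcal{K}\to B_{2,F_1}$, then $\mathcal{K}'\to B_{2,F_2}$, with trivial final kernel. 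Your ``standard extension argument'' asserting the restrictions are isotopic to the identity would erase precisely the braid-group part of $\mathcal{B}$. Finally, the paper identifies the fractional rotations not via the centralizer of a pseudo-Anosov but by observing that the isotopy of $F$ inside $M$ injects $\pi_1(F_1)\rtimes\mathbf{Z}$ into $\pi_1(M)$ and invoking Hempel's Theorem 11.1 to recognize the bundle structure; your centralizer argument addresses only the mapping class on the fiber, not why the ambient isotopy is realized by the bundle flow.
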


The entire mapping class group $Mod(M, \Sigma)$ can often be deduced from $Isot(M, \Sigma)$ given further information about the automorphisms of $M$ and its different genus $2g+1$ Heegaard splittings. However, that is beyond the scope of the present paper. 

Note that the bound on $d(\phi)$ is independent of the genus of $F$. Bachman-Schleimer~\cite{bs:bundles} showed that if $g$ is the genus of $F$ and $d(\phi) > 4g$ then $\Sigma$ is a minimal genus Heegaard surface for $M$. However, for $g > 7$, it is conceivable that $\Sigma$ is not the smallest Heegaard splitting for $M$ (though we will see that it is unstabilized). 

To define the displacement $d(\phi)$, recall that the \textit{curve complex} $\mathcal{C}(F)$ for $F$ is the simplicial complex whose vertices are isotopy classes of simple closed curves in $F$ and whose simplices span sets of vertices representing pairwise disjoint curves. The map $\phi$ defines an isometry of $\mathcal{C}(F)$, which we will also denote by $\phi$. The \textit{displacement} of $\phi$ is $d(\phi) = \min\{d(v,\phi(v))\}$ where $d(\cdot,\cdot)$ is the edge-path distance between two vertices in the curve complex.

There is a pair of disjoint, weak reducing disks $D^-$, $D^+$ for $\Sigma$ dual to the arcs $\alpha_-$, $\alpha_+$, respectively, such that compressing along these disks recovers $F$. Note that any isotopy of $\alpha_{\pm}$ that keeps the arcs vertical with their endpoints disjoint and in $F$ and returns to the original arcs will define an element of $Isot(M, \Sigma)$. We will call the subgroup of all such automorphisms the \textit{arc subgroup} $\mathcal{A} \subset Isot(M, \Sigma)$.

The movement of the endpoints of $\alpha_1$, $\alpha_2$ in $F_1$ under such an isotopy defines a braid in $F_1$, and thus determines a homomorphism $b : \mathcal{A} \rightarrow B_{2, F_1}$, where $B_{2, F_1}$ is the pure braid group of two strands in $F_1$. There is a second homomorphism from the kernel of $b$ into $B_{2, F_2}$. In fact, the reader can check that this homomorphism is onto the subgroup of $B_{2, F_2}$ in which the path of each strand is homotopy trivial. After conjugating by an isotopy that takes the path of one strand to the trivial path, we can identify this with the kernel of the inclusion map from the fundamental group of a once-punctured surface to a closed surface. We will show below that this homomorphism is injective, so $\mathcal{A}$ is finitely generated and we can explicitly construct a generating set from the above description. Note that all the groups described above are finitely generated.

The bundle structure defines an infinite cyclic cover $\hat M$ of $M$ such that $\hat M$ is homeomorphic to $F' \times \mathbf{R}$ and $\pi : M \rightarrow S^1$ lifts to the projection map of $\hat M$ onto the $\mathbf{R}$ factor of the product. We can assume the product structure is such that translating along the $\mathbf{R}$ factor permutes the preimages in $\hat M$ of points in $M$ and thus defines an automorphism of $M$. All these automorphisms are isotopy trivial on $M$ and translation by any half integer defines an automorphism $r_t : M \rightarrow M$ that takes $F$ onto itself. (The automorphism will switch $F_1$ and $F_2$ whenever $t$ is not a whole integer.) Because the arcs $\alpha_-$ and $\alpha_+$ are contained in disjoint balls, we can choose the product structure on $\hat M$ so that the automorphism induced by each half integer translation permutes $\alpha_-$ and $\alpha_+$ and, moreover, takes $\Sigma$ onto itself. 

Note that this $r_t$ is not unique -- it is only defined up to composition with elements of $\mathcal{A}$. The different possible choices of $r_t$ for a given $t$ thus define a coset of $\mathcal{A}$. Define $\mathcal{B}$ as the union of all such cosets for all half integers. By construction, this is an extension of $\mathcal{A}$ by $\mathbf{Z}$. The subgroup $\mathcal{A}$ is normal in $\mathcal{B}$, so we have a semi-direct product $\mathcal{B} = \mathcal{A} \rtimes \mathbf{Z}$, where $\mathbf{Z}$ acts on $\mathcal{A}$ by conjugating by a power of the monodromy. 

The subgroup $\mathcal{A}$ consists entirely of reducible automorphisms of $\Sigma$. However, it does not fall into the classification of reducible automorphisms of strongly irreducible Heegaard splittings recently given by the author and Hyam Rubinstein~\cite{jr:mcgs} because the Heegaard surface is weakly reducible. In fact, these mapping class groups combine behavior from two of the strongly irreducible classes: The automorphisms defined by the monodromy of the bundle are similar to the cyclic automorphisms induced by open book decompositions, which are further explored in~\cite{me:openbooks}. The braid group elements, however, are reminiscent of the automorphisms induced by one-sided Heegaard splittings~\cite{me:onesided}. The present paper, however, uses completely different (and much more concise) techniques than those in~\cite{me:openbooks} and~\cite{me:onesided}, which also lead to the genus independent distance bound.

Recall that a Heegaard surface $\Sigma$ is \textit{weakly reducible} if there is a pair of disjoint compressing disks on opposite sides of $\Sigma$. These disks are called a \textit{weak reducing pair}. In particular, the disks $D^-$, $D^+$ constructed above are a weak reducing pair for $\Sigma$. The proof of Theorem~\ref{mainthm} is based on the following Lemma, which is a result of Masur-Schleimer's work on holes in the complex of curves:

\begin{Lem}
\label{mainlem}
If the monodromy of $\pi$ has displacement distance strictly greater than 28 then $D^-$, $D^+$ is the unique (up to isotopy) pair of weak reducing disks for $\Sigma$.
\end{Lem}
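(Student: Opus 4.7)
The plan is to apply Masur--Schleimer's theorem that a handlebody disk set has bounded-diameter projection to any non-hole subsurface, applied to the two essential twice-punctured fibers $F_i^{\ast} = F_i \setminus \operatorname{int}(D_i^- \cup D_i^+) \subset \Sigma$ for $i = 1, 2$ (where $D_i^\pm$ are the small disks removed from $F_i$ during the tubing), and to transport the resulting bounds into $\mathcal{C}(F)$ via the capping maps $c_i \colon \mathcal{C}(F_i^{\ast}) \to \mathcal{C}(F)$, which are coarsely Lipschitz on disjoint inputs. A first observation is that $\Sigma \setminus (F_1^{\ast} \cup F_2^{\ast})$ is exactly the union of the two tubing annuli, whose core curves are $\partial D^-$ and $\partial D^+$; hence if the boundary of a compressing disk $E$ of $\Sigma$ is not isotopic to $\partial D^-$ or $\partial D^+$, then $\partial E$ meets $F_i^{\ast}$ essentially for some $i$, and $c_i \circ \pi_{F_i^{\ast}}(\partial E) \in \mathcal{C}(F)$ is well defined.

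The main step will be to argue, from Masur--Schleimer's hole analysis for the disk complex of a handlebody combined with the specific bundle structure, that the image $c_i \circ \pi_{F_i^{\ast}}(\mathcal{D}(H^\pm))$ lies in a $\mathcal{C}(F)$-ball of universal radius $R$ about a curve $v^\pm \in \mathcal{C}(F)$, and that the two centers are related by the monodromy so that $d_{\mathcal{C}(F)}(v^-, v^+) \geq d(\phi)$. Intuitively, $v^\pm$ will record the location in the fiber of the arc $\alpha_\pm$, and passing from an $H^-$-disk to an $H^+$-disk crosses the gluing region of the bundle, so the two centers are identified only after applying $\phi$.

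Given these inputs, the conclusion is a short triangle-inequality argument. Suppose $(E^-, E^+)$ is a weak reducing pair with $E^\pm$ not isotopic to $D^\pm$, and choose $i$ so that both $\partial E^-$ and $\partial E^+$ project essentially to $F_i^{\ast}$. Since $\partial E^- \cup \partial E^+$ is disjoint in $\Sigma$, the two projections to $\mathcal{C}(F_i^{\ast})$ differ by at most $2$, and hence by at most a small constant $C$ in $\mathcal{C}(F)$ after capping. Combining with the two radius-$R$ bounds, the triangle inequality yields
\[
d(\phi) \;\leq\; d_{\mathcal{C}(F)}(v^-, v^+) \;\leq\; 2R + C \;\leq\; 28,
\]
contradicting the hypothesis; so $(E^-, E^+)$ must coincide with $(D^-, D^+)$ up to isotopy.

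The principal obstacle will be the main step: verifying that $F_i^{\ast}$ is indeed a non-hole for each disk set $\mathcal{D}(H^\pm)$ and locating the centers $v^\pm$ to within a universal constant. This is precisely where the bundle structure meets the hole analysis, and the rest of the argument is essentially bookkeeping once it is in hand. The explicit constant $28$ in the hypothesis tracks the Masur--Schleimer hole constants together with the capping Lipschitz factors, and its independence of the genus of $F$ reflects the genus-independence of those constants.
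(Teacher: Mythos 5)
Your high-level shape (a Masur--Schleimer bound plus the bundle structure forces $d(\phi)\leq 28$, contradicting the hypothesis) matches the paper, but the step you defer as the ``main step'' is not merely unverified --- it is false as stated. The twice-punctured fiber $F_1^{\ast}$ is indeed a non-hole for $\mathcal{D}(H^-)$, since $\partial D^-$ misses it; but there is no theorem that non-holes receive bounded-diameter projections from the entire disk set of a handlebody, and here they do not. Band-summing $D^-$ with a parallel copy of itself along an arbitrary arc $\delta$ that enters $F_1^{\ast}$ through $\partial P_1^-$ produces an essential disk of $H^-$ whose boundary meets $F_1^{\ast}$ in two essential arcs parallel to $\delta\cap F_1^{\ast}$. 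Since $\delta$ is arbitrary, $\pi_{F_1^{\ast}}(\mathcal{D}(H^-))$ has infinite diameter, there are no centers $v^{\pm}$, and the claimed inequality $d(v^-,v^+)\geq d(\phi)$ cannot hold: the same band sums on $D^+$ show that both disk sets project onto overlapping unbounded sets in a single fiber. The monodromy simply cannot be detected by comparing the two disk sets inside one fiber.

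The paper's argument uses Masur--Schleimer in a per-disk, two-fiber way instead. Compressing $\Sigma$ along $D^-$ alone (i.e.\ attaching only the annulus $A^+$) yields a genus $2g$ handlebody $B^-$ that is an interval bundle over a once-punctured fiber, and Lemma 12.12 of \cite{masur-schleimer} bounds, for a single essential disk $C^-\subset B^-$, the arc-complex distance between the image under the interval-bundle map $\gamma^-$ of an arc of $\partial C^-\cap F_1$ and an arc of $\partial C^-\cap F_2$ by $6$. An outermost-disk and graph argument (this is where one uses that $E^-$ is not isotopic to $D^-$ and is non-separating) extracts an essential loop $\ell_1^-$ from $\partial C^-\cap F_1$, and similarly for $E^+$ in the other interval bundle $B^+$; disjointness of $E^-$ and $E^+$ bridges the two estimates within each fiber, and since $\gamma^+\circ\gamma^-=\phi$ the chain gives $d(\ell_1^-,\phi(\ell_1^-))\leq 12+2+12+2=28$. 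Your single-fiber triangle inequality never composes the two interval-bundle maps, so $\phi$ never enters the estimate; that composition is the essential mechanism your sketch is missing.
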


\begin{proof}
When we attach tubes to construct $\Sigma$ from $F$, we remove two disks from each component of the surface, then attach annuli to the boundaries where these disks were removed. Let $P^\pm_1$ be the disks removed from $F_1$ and $P^\pm_2$ the disks removed from $F_2$ such that the annulus $A^+$ along $\alpha_+$ is attached to $\partial P^+_1$ and $\partial P^+_2$, while the annulus $A^-$ along $\alpha_-$ is attached to $\partial P^-_1$ and $\partial P^-_2$.

As above, we let $D^-$, $D^+$ be the weak reducing disks dual to $\alpha_-$, $\alpha_+$, respectively, shown in Figure~\ref{surfacefig}. Assume for contradiction there is a second, distinct weak reducing pair $E^-$, $E^+$, both with non-separating boundaries, labeled so that $E^-$ is on the same side of $\Sigma$ as $D^-$. Then one of the disks $E^-$, $E^+$ is distinct from the corresponding $D^\pm$ and we will assume without loss of generality that $E^-$ is not isotopic to $D^-$. 
\begin{figure}[htb]
  \begin{center}
  \includegraphics[width=3in]{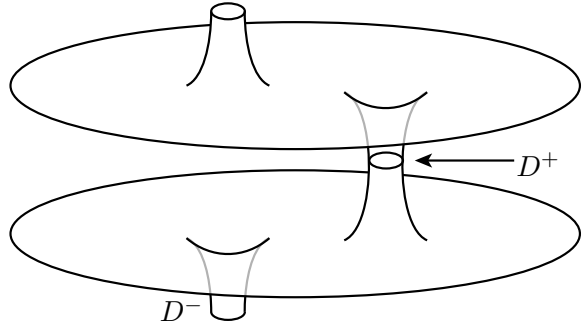}
  \put(-25,55){$D^+$}
  \put(-160,0){$D^-$}
  \caption{The genus $2g+1$ Heegaard surface.}
  \label{surfacefig}
  \end{center}
\end{figure}

Isotope $E^-$ to intersect $D^-$ minimally. If we remove the disks $P^+_1$, $P^+_2$ from $F_1$ and $F_2$, then attach the annulus $A^+$, the resulting surface bounds a handlebody $B^-$. If $E^-$ is contained in $B^-$ then let $C^- = E^-$. Otherwise, the complement $E^- \setminus B^-$ is a regular neighborhood of $E^- \cap D^-$ and we will let $C^- \subset E^-$ be an outermost disk of $E^- \cap B^-$. 

Note that $\partial C'$ intersects $P^-_1 \cup P^-_2$ in at most one arc. If $\partial C^-$ is trivial in $\partial B^-$ then it bounds a disk in $\partial B^-$ containing $P^-_1$ or $P^-_2$ in its interior. If $\partial C^-$ is isotopic to $\partial P^-_1$ or $\partial P^-_2$ in the complement of the two disks then $E^-$ is isotopic to $D^-$, contradicting our initial assumption. If $\partial C^-$ bounds a disk containing both $P^-_1$ and $P^-_2$ in its interior then $\partial E^-$ is separating in $\Sigma$, which also contradicts our initial assumption. Thus $C^-$ must intersect $P^-_1$ or $P^-_2$, and we will temporarily assume without loss of generality that it intersects $P^-_1$. This also implies that $E^-$ must intersect $\partial D^+$. Since $E^+$ is disjoint from $E^-$, we conclude that $E^+$ is not isotopic to $D^+$. 

If $P^-_2$ is in the interior of a disk in $\partial B^-$ bounded by $\partial C^-$ then every arc of $\partial E^- \cap \partial B^-$ with one endpoint in $\partial P^-_2$ must have its second endpoint in $\partial P^-_1$. This implies that $\partial E^- \cap P^-_1$ contains at least two more points than $\partial E^- \cap P^-_2$, contradicting the fact that both intersections contain the same number of points as $\partial E^- \cap \partial D^-$. We therefore conclude that $\partial C^-$ is essential in $\partial B^-$.

The intersection $\partial C^- \cap F_1$ consists of arcs properly embedded in $F_1 \setminus P^+_1$, possibly with one arc intersecting $P^-_1$ in a single subarc. We will think of the arcs as forming a graph $G^-_1$ with vertices $P^+_1$, $P^-_1$, and note that the vertex corresponding to $P^-_1$ has valence zero or two.

If $G^-_1$ is contained in a disk then there is an isotopy of $\partial C^-$ in $\partial B^-$ into $\partial B^- \cap F_2$. However, $F_2$ is incompressible in $M$ and $M$ is irreducible, so this implies that $C^-$ is parallel into $F_2$. Because $C^-$ is disjoint from the annulus $A^+$, it must be parallel into $\partial B^-$ as well, which we ruled out above. Thus the graph $G^-_1$ must contain an essential edge loop $\ell^-_1$. Since $G^-_1$ has two vertices, the edge length of $\ell^-_1$ is at most two. Similarly, we can find edge loops $\ell^+_1$ in the graphs defined by $E^+ \cap F_1$ and $\ell^\pm_2$ in $E^\pm \cap F_2$.

Because $E^-$ and $E^+$ are disjoint, any intersections between $\ell^+_1$ and $\ell^-_1$ are contained in $P^\pm_1$, so the two loops intersect in at most two points. This implies there is an essential loop $\ell_1$ disjoint from both. Similarly, there is an essential loop $\ell_2$ disjoint from $\ell^+_2$ and $\ell^-_2$.

The handlebody $B^-$ inherits an interval bundle structure, which defines a map $\gamma^+ : (F_1 \setminus P^+_1) \rightarrow (F_2 \setminus P^+_2)$.  By Masur-Schleimer~\cite[Lemma 12.12]{masur-schleimer}, for any arcs $\beta_1$ of $C^- \cap F_1$ and $\beta_2$ of $C^- \cap F_2$, we have $d(\gamma^-(\beta_1),\beta_2) \leq 6$, where the distance is in the arc complex for $F_2 \setminus P^+_2$. (The arc complex is similar to the complex of curves, but with vertices representing properly embedded, essential arcs in a surface with boundary.) Because $F^2 \setminus P^+_2$ has a single puncture, every essential arc determines an essential loop in $F^2$. Disjoint arcs define essential loops that intersect in at most one point in $F_2$, and thus have distance at most two in the curve complex $\mathcal{C}(F)$. Thus the distance from $\gamma^-(\ell^-_1)$ to $\ell^-_2$ is at most 12. 

A similar argument applies to the map $\gamma^+ : (F_2 \setminus P^-_2) \rightarrow (F_1 \setminus P^-_1)$ defined by the surface that results from only attaching $A^-$ to $F$. The image of $\ell^-_1$ under the composition $\gamma^+ \circ \gamma^-$ is the same as the image of $\ell^-_1$ under the monodromy map $\phi$. By the argument above the distance in $\mathcal{C}(F)$ between these loops is at most $12 + 2 + 12 + 2 = 28$. Thus the displacement of $\phi$ is at most $28$. Since we assumed that $d(\phi) > 28$, this contradiction implies that $E^- = D^-$ and $E^+ = D^+$ form the unique reducing pair for $\Sigma$.
\end{proof}

Before we continue to the proof of Theorem~\ref{mainthm}, we note another implication related to the theory of topological index of surfaces. Recall that the \textit{topological index} of $\Sigma$ is zero if $\Sigma$ is incompressible and otherwise is equal to the smallest value $(i+1)$ such that the $i$th homotopy group of the disk set for $\Sigma$ is non-trivial. If the disk complex is contractible then $\Sigma$ will not have a well defined index.

McCullough has shown~\cite{mccullough} that the set of compressing disks on one side of any two-sided embedded surface forms a contractible complex. The disk complex for $\Sigma$ consists of these two contractible sets connected by a single edge between $D^-$ and $D^+$, so we have:

\begin{Coro}
The disk set for $\Sigma$ is contractible.
\end{Coro}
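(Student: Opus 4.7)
The plan is to assemble the disk complex $\mathcal{D}(\Sigma)$ from its two side-components and the single weak-reducing edge, then contract. Write $\mathcal{D}^-, \mathcal{D}^+ \subset \mathcal{D}(\Sigma)$ for the full subcomplexes spanned by compressing disks on the $-$ and $+$ sides of $\Sigma$, respectively. By McCullough's theorem, each of $\mathcal{D}^-$ and $\mathcal{D}^+$ is contractible.

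The key ingredient is Lemma~\ref{mainlem}: the only weak reducing pair is $\{D^-, D^+\}$. I would use this to show $\mathcal{D}(\Sigma) = \mathcal{D}^- \cup \mathcal{D}^+ \cup e$, where $e$ is the single edge spanned by $D^-$ and $D^+$, and that there are no higher-dimensional simplices with vertices on both sides. Any simplex $\sigma$ meeting both sides contains at least one edge between sides, which by Lemma~\ref{mainlem} must be $e$; in particular $D^-, D^+ \in \sigma$. If $\sigma$ contained a further vertex $E$, then $E$ would be disjoint from both $D^-$ and $D^+$ and lie on one of the two sides; say $E$ lies on the $-$ side. Then $\{E, D^+\}$ would be a weak reducing pair, forcing $E = D^-$ by uniqueness. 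Thus $\sigma$ is a face of $e$, and the claimed decomposition holds.

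It remains to observe that $\mathcal{D}(\Sigma)$ is contractible. It is the pushout of the diagram $\mathcal{D}^- \hookleftarrow \{D^-\} \hookrightarrow e \hookleftarrow \{D^+\} \hookrightarrow \mathcal{D}^+$ along cofibrations of CW pairs. Since $\mathcal{D}^-$ and $\mathcal{D}^+$ are contractible and the inclusions of the endpoints of $e$ are cofibrations, the pushout is homotopy equivalent to $e$, which is contractible. Concretely, a contraction can be described by first deformation retracting $\mathcal{D}^\pm$ onto $D^\pm$, leaving only the edge $e$, and then collapsing $e$ to a point.

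The only genuinely substantive step is the ruling out of higher-dimensional mixed simplices above; everything else is a formal consequence of McCullough's theorem and the pushout description. I do not anticipate any further obstacle.
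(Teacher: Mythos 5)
Your proof is correct and follows essentially the same route as the paper: McCullough's contractibility of each side's disk complex plus Lemma~\ref{mainlem} to show the two sides are joined only along the single edge $\{D^-,D^+\}$. You simply make explicit the verification (no mixed simplices beyond faces of $e$, and the gluing argument) that the paper leaves as a one-sentence remark.
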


Note that strongly irreducible Heegaard surfaces have index one. A number of weakly reducible Heegaard surface have been constructed with higher indices~\cite{bachman-johnson}\cite{lee}. The surface constructed here appears to be the first class of minimal genus Heegaard surfaces known to have a contractible disk complex. 

We now proceed to the main proof.

\begin{proof}[Proof of Theorem~\ref{mainthm}]
Every automorphism $\psi \in Mod(M, \Sigma)$ takes the weak reducing pair $D^-$, $D^+$ onto a new weak reducing pair of disks. However by Lemma~\ref{mainlem}, $D^-$, $D^+$ is the only weak reducing pair so $\psi$ either fixes these disks (setwise) or transposes them. The surface $F$ results from compressing $\Sigma$ along $D^-$, $D^+$ so we can choose a representative of $\psi$ that takes $F$ to itself. Moreover, because $\psi$ fixes $F \cup \Sigma$ setwise, we can choose a representative that also fixes the arcs $\alpha_\pm$.

By assumption, $\psi \in Isot(M, \Sigma)$ so $\psi$ is isotopic to the identity on $M$ and defines an isotopy from $F$ to itself. The surface $F$ consists of two leaves of the surface bundle defined by $\pi$, each of which has injective fundamental group. The isotopy thus defines an injection from a semi-direct product $\pi_1(F_1) \rtimes \mathbf{Z}$ into $\pi_1(M)$. As in Theoreom 11.1 in~\cite{hem:book}, this implies that the isotopy is induced by a surface bundle structure $\pi' : M \rightarrow S^1$. Because the fiber of $\pi'$ is $F_1$, which is also the fiber of $\pi$, we conclude that $\pi'$ is isotopic to $\pi$ so the isotopy $\psi$ must be defined by the maps $\{r_t\}$ that spin $F$ around the monodromy of the surface bundle.

Each $\psi$ thus determines a half integer and we find a homomorphism from $Isot(M, \Sigma)$ to $\mathbf{Z}$ (or rather to the half integers). If $\psi$ is in the kernel $\mathcal{K}$ of this homomorphism then the restriction of $\psi$ to $F$ is isotopic to the identity (ignoring the endpoints of $\alpha_\pm$). If we consider the endpoints, however, we find a homomorphism from $\mathcal{K}$ to the braid group $B_{2,F_1}$. The kernel $\mathcal{K}'$ of this second homomorphism is the set of all isotopies of $F \cup \alpha_\pm$ that fix the endpoints of $\alpha_\pm$ in $F_1$. The paths of the endpoints in $F_2$, in turn, define a homomorphism into the two-strand braid group on $F_2$. The reader can check that these braids are all homotopy trivial in $F_2$ and that all homotopy trivial braids arise in this way. 

The kernel $\mathcal{K}''$ of this final homomorphism is the set of isotopies of the arcs that fix all four endpoints. Any automorphism of $\Sigma$ induced in this way would be the identity outside a pair of annuli. However, there is a canonical vertical arc transverse to each annulus, so no non-trivial automorphism can arise in this way. Thus $\mathcal{K}''$ is trivial so $\mathcal{K}'$ is isomorphic to the homotopy trivial subgroup of $B_{2,F_2}$. By construction, $\mathcal{K}$ is an extension of $B_{2,F_1}$ by $\mathcal{K}' = B_{2,F_2}$ so $\mathcal{K} = \mathcal{B}$. Finally, $Isot(M, \Sigma)$ is an extension of $\mathcal{K}$ by the integers, with $\mathcal{K}$ a normal subgroup and the integers acting on $\mathcal{K}$ by conjugating by the mondromy map $\phi$. Thus $Isot(M, \Sigma)$ is the semi-direct product $\mathcal{A}$ defined above.
\end{proof}

\bibliographystyle{amsplain}
\bibliography{bundles}

\end{document}